\documentclass[11pt]{article}

\usepackage[T1]{fontenc}
\usepackage[utf8]{inputenc}
\usepackage{lmodern}
\usepackage{amsmath,amssymb,amsthm,mathtools}
\usepackage{bm}
\usepackage{enumitem}
\usepackage{geometry}
\usepackage{hyperref}
\usepackage{microtype}
\usepackage{array}
\usepackage[toc,page]{appendix}

\hypersetup{
    colorlinks=true,
    linkcolor=blue,
    citecolor=blue,
    urlcolor=blue
}

\newtheorem{theorem}{Theorem}
\newtheorem{lemma}{Lemma}
\newtheorem{remark}{Remark}

\newtheorem{proposition}{Proposition}

\title{An Exact Continuous Conductance Formulation of the Hamiltonian Path Problem}
\author{A. Sadovsky}
\date{}

\begin{document}

\maketitle

\begin{abstract}
The Hamiltonian Path Problem is formulated as a continuous minimization problem on conductances assigned to an Ohmic network associated with the given graph. The objective function is a sum of two penalty terms that collectively enforce a set of conditions sufficient for a subgraph of the original graph to be a Hamiltonian path. The objective function is nonconvex. The main result (Theorem~\ref{thm:main}) shows that, provided the graph has a Hamiltonian path from $h^{start}$ to $h^{end}$, a conductance configuration is a global minimizer of the objective if and only if it corresponds to a Hamiltonian path.
\end{abstract}

\tableofcontents

\section{Introduction}
\label{sec:introduction}

The Hamiltonian Path Problem (HPP)--the problem of deciding whether a given undirected graph contains a path that includes every node exactly once--is NP-complete \cite{Karp1972}. The problem arises naturally in scheduling, printed-circuit layout, genome assembly, and a range of combinatorial design tasks. The next four paragraphs briefly review related work.

Exact algorithms for HPP based on dynamic programming achieve time complexity $O(N^2 2^N)$ and space $O(N 2^N)$; see the Held-Karp method \cite{HeldKarp1962}. Practically efficient backtracking algorithms with pruning have been developed for moderate graph sizes. For large instances, heuristic and metaheuristic methods---including simulated annealing, genetic algorithms, and ant colony optimization---are widely used \cite{Applegate2006}.

A separate and increasingly active line of work reformulates graph problems as quadratic unconstrained binary optimization (QUBO) or Ising Hamiltonian minimization, targeting quantum annealers and gate-based quantum devices. Lucas~\cite{Lucas2014} gave systematic Ising-model encodings for a large class of NP problems including HPP, expressing the Hamiltonian path constraint through penalty terms that are quadratic in binary variables. These formulations have been implemented on D-Wave systems and small quantum processors, though the problem sizes accessible to current hardware remain modest.

Yet another category of approaches is inspired by neural networks and physics. Hopfield and Tank~\cite{HopfieldTank1985} proposed continuous-valued neural networks whose energy function encodes the travelling salesman problem, a relative of HPP, and showed empirically that gradient descent on this energy finds good solutions. Their approach was influential in establishing the idea that combinatorial structure can be encoded in continuous objectives, even though the energy function there has no provable exactness guarantee. More recent deep-learning and reinforcement-learning approaches learn heuristics for routing and sequencing problems \cite{VinyalsEtAl2015} but do not provide theoretical guarantees on solution quality.

A comprehensive survey of progress on the HPP and the closely related Hamiltonian cycle problem (where a cycle is sought that includes every node of the graph exactly once) can be found in \cite{gould2003advances}.

This paper proposes a formulation distinct from all of the above. The HPP is approached here by designing a theoretical electric network that includes the given graph and places variable conductances on its arcs. Namely, a connected undirected graph
\begin{equation}
\label{equation:original-graph}
G_{0} = (\mathcal N_0,\mathcal A_0)
\end{equation}
with node set $\mathcal N_0$ of size
\begin{equation}
\label{equation:node-set-size}
N_0 = |\mathcal N_0| \geq 3
\end{equation}
and arc set $\mathcal A_0$ is given. Hamiltonian paths in (\ref{equation:original-graph})  sought. The nodes in (\ref{equation:original-graph}) are treated as the nodes in an electric network, and the arcs as branches in the network. Two nodes,
$$
h^{start}, h^{end}  \in  \mathcal N_0,
$$
are chosen, by a method described below, as the intended endpoints of a Hamiltonian path.

The electric network interpretation is to think of an external source of electromotive force (e.m.f.) with the terminals connected to $h^{start}$ and $h^{end}$. The terminal connected to $h^{end}$ is grounded. The other will have a positive potential value, specified and explained below. 

The arcs, in their role of electric network branches, are furnished with adjustable conductances
$$
g_a  \in  [0,1], \quad a  \in  \mathcal A_0.
$$
A mapping $g$ that assigns to each arc $a  \in  \mathcal A_0$ a conductance $g_{a}  \in  [0, 1]$ will be called {\em a conductance configuration}.

This theoretical design results in each arc carrying a current which either is zero or has a direction that orients the arc. Therefore, the model involves assigning to an undirected arc $\{i,j\}  \in  \mathcal A_0$ one of the two directions $(i,j)$ or $(j,i)$ whenever the induced current through that arc is nonzero.

The goal is to find such a conductance configuration
\begin{equation}
\label{equation:conductance-configuration}
g = \left\{ g_a  \in  [0, 1] : a  \in  \mathcal A_0 \right\}
\end{equation}
that the resulting currents have positive unit value, $1$, along a sequence
$$
\left( h^{start} = v_{1}, v_{2} \right), \left(v_{2}, v_{3} \right), \ldots, \left( v_{N_0-1}, v_{N_0} = h^{end} \right)
$$
of directed arcs that determines a Hamiltonian path
\begin{equation}
\label{equation:HP-in-G_0}
h^{start} = v_{1} \to v_{2} \to \ldots \to v_{N_0-1} \to v_{N_0} = h^{end},
\end{equation}
while the currents on all the other arcs in $\mathcal{A}_{0}$ are zero.

Such conductance configurations will be sought as minimizers of an objective function, defined in section~\ref{section:total-objective}. This objective function will be a sum of terms that penalize conductance configurations for failing the key requirements for an HPP: local unit out-flow at all nodes but one (section~\ref{section:local-out-flow}) and absence of currents branching out of a node (section~\ref{section:branching-penalty}).

The main result of this paper, Theorem~\ref{thm:main} (section~\ref{sec:main-result}), states that if the graph has a Hamiltonian path, then every global minimizer $g_{*}$ of the objective determines a Hamiltonian path. The proof is constructive, and the argument is exact. The theorem characterizes the global minimizers but does not assert that finding them is polynomial-time. The objective $\widetilde{\mathcal{L}}$ is generally not convex.

The electric network formulation and the graph augmentation used to connect the e.m.f. to the graph are given in section~\ref{sec:HPP-as-linear-circuit-problem}. The potentials and currents resulting from the terminal boundary conditions and from Ohm's and Kirchhoff's laws are derived in section~\ref{section:potentials-and-currents}. The penalty terms are defined in section~\ref{section:penalty-terms}. The key results are stated and proved in section~\ref{sec:main-result}. 

\section{The electric network framework for seeking Hamiltonian paths}
\label{sec:HPP-as-linear-circuit-problem}

\subsection{Terminal-node construction}
\label{section:terminal-node-construction}

In the tasks of choosing the nodes $h^{start}, h^{end}$ in $\mathcal{N}_{0}$ and designing their connections to the terminals of a source of e.m.f., it is helpful to distinguish the four cases listed in Table~\ref{table:e.m.f.-terminal-design}.
\begin{table}[t]
\caption[]{Design for connecting the terminals of an e.m.f. source to the given graph.}
\begin{tabular}{p{1.6in}p{4.0in}}
{\bf case} & {\bf design}\\ \hline
$G_{0}$ has more than two nodes of degree $1$. & $G_{0}$ has no Hamiltonian path.\\ \hline
$G_{0}$ has exactly two nodes of degree $1$. & These two nodes are the only candidates (up to a swap) for the roles of $h^{start}$ and $h^{end}$. Set $v^{IN} = h^{start}$, $v^{OUT}= h^{end}$, and
$$
\mathcal N := \mathcal N_0, \qquad \mathcal A := \mathcal A_0.
$$
\\ \hline
$G_{0}$ has exactly one node of degree $1$. & This node should serve as $h^{start}$. Choose $h^{end}$ by any method. Set $v^{IN}=h^{start}$, introduce a new node $v^{OUT} \notin \mathcal N_0$ and the arc $\{h^{end}, v^{OUT}\}$, and define
$$
\mathcal N := \mathcal N_0 \cup \{v^{OUT}\}, \qquad \mathcal A := \mathcal A_0 \cup \left\{ \left\{ h^{end}, v^{OUT} \right\} \right\}.
$$
\\ \hline
$G_{0}$ has no nodes of degree $1$. & Choose $h^{start}$ and $h^{end}$ by any method, introduce two new nodes $v^{IN},v^{OUT} \notin \mathcal N_0$, and add two new arcs:
$$
\mathcal N := \mathcal N_0 \cup \{v^{IN},v^{OUT}\}, \qquad \mathcal A := \mathcal A_0 \cup \bigl\{\{v^{IN}, h^{start}\},\{h^{end}, v^{OUT}\}\bigr\}.
$$
\\ \hline
\end{tabular}
\label{table:e.m.f.-terminal-design}
\end{table}
Whichever of the cases listed in Table~\ref{table:e.m.f.-terminal-design} takes place, take the resulting, ``augmented,'' node set $\mathcal{N}$ and arc set $\mathcal{A}$ and denote the resulting graph by
\begin{equation}
\label{equation:augmented-graph}
G=(\mathcal N,\mathcal A).
\end{equation}
Note that always
\begin{equation}
\label{equation:vIN-and-vOUT-are-leaves}
\deg(v^{IN})=\deg(v^{OUT})=1.
\end{equation}
By the above construct, 
\begin{equation}
\label{equation:HP-bijection}
\left.
\begin{array}{c}
\mbox{\em A Hamiltonian path (\ref{equation:HP-in-G_0}) in $G_{0}$ corresponds to, and is a subpath of, }\\
\mbox{\em a unique Hamiltonian path from $v^{IN}$ to $v^{OUT}$ in $G$.}\\
\end{array}
\right\}
\end{equation}
Let
\begin{equation}
\label{equation:definition-of-N}
N:=|\mathcal N|.
\end{equation}

If arc $\{v^{IN},v^{OUT}\}$ is present in the original graph, i.e., $\{v^{IN},v^{OUT}\} \in \mathcal A_0$, then its conductance is fixed at $0$. Otherwise, the graph would contain an arc directly connecting the terminals of the voltage source, creating a shunt path outside the intended Hamiltonian structure. This does not alter the Hamiltonian-path problem, since a Hamiltonian path from $v^{IN}$ to $v^{OUT}$ cannot use this arc unless $N=2$, a possibility ruled out by (\ref{equation:node-set-size}).

\subsection{The space of conductance configurations}
\label{sec:conductance}

The set of all possible conductance configurations (\ref{equation:conductance-configuration}) is the cube $[0,1]^{|\mathcal A_0|}$. The conductances on the arcs $a  \in  \mathcal A \setminus \mathcal A_0$ (i.e., on whatever arcs, if any, that were added per Table~\ref{table:e.m.f.-terminal-design}) are not variables. They are constant parameters in the model, always set equal to $1$.

Denote by $\mathcal A_0[g=1] := \{a \in \mathcal A_0:g_a=1\}$ the set of all arcs in $\mathcal A_0$ that carry the maximal conductance of $1$. This set determines the subgraph $(\mathcal N_0,\mathcal A_0[g=1])$ of the original graph and, once the nodes $v^{IN}, v^{OUT}$ are identified and one or both of the arcs $\{v^{IN}, h^{start}\}$, $\{h^{end}, v^{OUT}\}$ added if needed, also the subgraph
\begin{equation}
\label{equation:full-conductance-subgraph-in-augmented}
\left( \mathcal N, \; \{a  \in  \mathcal A : g_a=1\} \right)
\end{equation}
of the augmented graph.

\subsection{Target set}

Define the subset
\begin{equation}
\label{equation:target-set}
\mathcal T_{HP} := \left\{ g \in \{0,1\}^{|\mathcal A_0|}: \mathcal A_0[g=1] \text{ is the arc set of a Hamiltonian path from } v^{IN} \text{ to } v^{OUT} \right\}
\end{equation}
of $[0,1]^{|\mathcal A_0|}$. Hamiltonian paths in the original graph will be sought as conductance configurations in $\mathcal T_{HP}$.

\begin{remark}
The elements of $\mathcal T_{HP}$ are vertices of the cube $[0,1]^{|\mathcal A_0|}$, i.e., binary conductance configurations. Establishing that a global minimizer of the objective $\widetilde{\mathcal{L}}$ over the full cube must be a vertex, and moreover a vertex in $\mathcal T_{HP}$, is the central result proved in section~\ref{sec:main-result}.
\end{remark}

\section{The potentials and currents derived from Ohm's and Kirchhoff's laws}
\label{section:potentials-and-currents}

\subsection{Voltage boundary conditions}
\label{section:voltage-boundary-conditions}

The external source of e.m.f.\ used here is an ideal {\em voltage source} \cite{Seshu1959linear}. No arc is added to the graph to house that voltage source. The potentials at its terminals will be chosen so that the potential difference between the nodes $v^{IN}$ and $v^{OUT}$ is $(N-1)$. Namely, set these potentials to:
\begin{equation}
\label{equation:potential-boundary-values}
\phi_{v^{IN}} := N-1, \qquad \phi_{v^{OUT}} := 0.
\end{equation}
The reason for this choice is as follows. If a conductance configuration $g$ is such that the subgraph (\ref{equation:full-conductance-subgraph-in-augmented}) is a Hamiltonian path from $v^{IN}$ to $v^{OUT}$ in the augmented graph, then {\em this path always has $(N-1)$ arcs (see (\ref{equation:definition-of-N})), regardless of which of the cases listed in Table~\ref{table:e.m.f.-terminal-design} is at hand}. Thus, the path has total resistance $(N-1)$ and potential drop (\ref{equation:potential-boundary-values}), and consequently carries unit current in its every arc.

\subsection{Effective conductance configurations and the conductance-weighted Laplacian}

Throughout the rest of this section, let $g$ be a conductance configuration.

Define {\em the effective conductance configuration $\tilde{g}$ corresponding to $g$} by
$$
\tilde g_a = \begin{cases} g_a, & a \in \mathcal A_0, \\ 1, & a \in \mathcal A\setminus\mathcal A_0. \end{cases}
$$
Thus the augmentation arcs, when present, are assigned fixed conductance $1$.

The weighted Laplacian associated with $\tilde g$ on the augmented graph $G=(\mathcal N,\mathcal A)$ is defined by
$$
L_{ii}(g) = \sum_{j:\{i,j\} \in \mathcal A} \tilde g_{\{i,j\}}, \qquad L_{ij}(g) = L_{ji}(g) = -\tilde g_{\{i,j\}}, \quad i\neq j.
$$

Let $\mathcal A[g>0] := \mathcal A_0[g>0] \cup (\mathcal A\setminus\mathcal A_0)$ denote the set of all the arcs with positive conductances under configuration $g$. The subgraph
\begin{equation}
\label{equation:conducting-subgraph}
(\mathcal N,\mathcal A[g>0])
\end{equation}
of the augmented graph will be called {\em the conducting subgraph} associated with $g$.

\subsection{The potential vector and the directed currents}
\label{section:potential-vector}

A connected component of the conducting subgraph (\ref{equation:conducting-subgraph}) that contains neither of the nodes $v^{IN}, v^{OUT}$ has no source of e.m.f. A connected component that contains one of these two nodes but not the other is an open circuit. In either case, such a component has current zero on every arc. By convention, the potential $\phi_{i}(g)$ at every node in such a component will be taken to equal zero.

Now suppose that $v^{IN}$ and $v^{OUT}$ lie in the same connected component of the conducting subgraph (\ref{equation:conducting-subgraph}). Let $\mathcal{N}_{P} \subset \mathcal N$ denote the node set of that connected component: the subscript {\em P} stands for ``{\em p}owered by the voltage source.'' The potential vector $\phi(g) = (\phi_i(g))_{i \in \mathcal{N}_{P}}$ for this component will be the unique solution of the Dirichlet problem consisting of the Kirchhoff equations
\begin{equation}
\label{equation:voltage-potential-equation}
\sum_{j:\{i,j\} \in \mathcal A} \tilde g_{\{i,j\}} \bigl( \phi_i(g)-\phi_j(g) \bigr) = 0, \qquad i \in  \mathcal{N}_{P} \setminus \{v^{IN},v^{OUT}\}
\end{equation}
and the Dirichlet boundary conditions (\ref{equation:potential-boundary-values}).

For each undirected arc $\{i,j\} \in \mathcal A$, introduce the oriented arcs $(i,j)$ and $(j,i)$. Define {\em the directed current} by Ohm's law:
\begin{equation}
\label{equation:single-directed-current}
J_{(i,j)}(g) := \tilde g_{\{i,j\}} \bigl( \phi_i(g)-\phi_j(g) \bigr).
\end{equation}
An immediate consequence of the above is $J_{(i,j)}(g) = -J_{(j,i)}(g)$.

The set of all arcs carrying nonzero currents,
\begin{equation}
\label{equation:absolute-current-support}
\mathcal A[Y > 0] := \bigl\{ \{i,j\}: |J_{(i,j)}(g)|>0 \bigr\},
\end{equation}
determines a subgraph of $G$ called the {\em absolute current support}.

\section{Penalty terms}
\label{section:penalty-terms}

The notation $s^+ := \max\{s,0\}$ for the positive part of a real-valued function will be used below.

\subsection{Local out-flow penalty}
\label{section:local-out-flow}

For each node $i \in \mathcal N$, define the total squared positive outgoing current
\begin{equation}
\label{equation:outflow}
O_i(g) := \sum_{j:\{i,j\} \in \mathcal A}  \bigl(J_{(i,j)}(g)\bigr)^{+} 
\end{equation}
A Hamiltonian path from $v^{IN}$ to $v^{OUT}$ would require every node except $v^{OUT}$ to have $O_i(g) = 1$:
\begin{equation}
\label{equation:unit-out-flow-requirement}
O_i(g) = \begin{cases} 0, & i=v^{OUT} \quad \mbox{(see the top paragraph of section~\ref{section:voltage-boundary-conditions})},\\ 1, & i\neq v^{OUT}. \end{cases}
\end{equation}
To capture requirement (\ref{equation:unit-out-flow-requirement}) as a penalty, define {\em the out-flow penalty}
\begin{equation}
\label{equation:unit-flow-penalty}
\mathcal{L}_{\mathrm{outflow}}(g) := O_{v^{OUT}}(g)^{2} + \sum_{\stackrel{i \in \mathcal N}{i \neq v^{OUT}}} \left( O_i(g)-1 \right)^2.
\end{equation}

The considerations stated in the top paragraph of section~\ref{section:potential-vector} show that penalty term (\ref{equation:unit-flow-penalty}) also enforces connectivity of the absolute current support: if any node $i \neq v^{OUT}$ is not reached by any current, then $O_i(g) = 0 \neq 1$, contributing a positive term to $\mathcal{L}_{\mathrm{outflow}}(g)$.

\subsection{Branching penalty}
\label{section:branching-penalty}

The penalty term introduced in this section is to penalize the undesirable situation when positive currents flow out of a node on two or more directed arcs emanating from that node. Fix a node $i  \in  \mathcal N$ and denote its neighbors by $j_1,\ldots,j_{d_i}$. Define {\em the local branching penalty at node $i$} by
\begin{equation}
\label{equation:local-branching-penalty}
B_i(g) := \sum_{1\leq k<\ell\leq d_i} \, \bigl(J_{(i,j_k)}(g)\bigr)^{+}  \,  \bigl(J_{(i,j_\ell)}(g)\bigr)^{+} .
\end{equation}
Since every summand is nonnegative, $B_i(g)\geq 0$, with equality holding if and only if at most one outgoing arc from $i$ carries positive current. Define {\em the global branching penalty} by summing the local ones (\ref{equation:local-branching-penalty}) over all the nodes:
\begin{equation}
\label{equation:branching-penalty}
\mathcal{L}_{\mathrm{branch}}(g) := \sum_{i \in \mathcal N} B_i(g).
\end{equation}

\subsection{Total objective}
\label{section:total-objective}

Define the total objective
\begin{equation}
\label{equation:total-objective}
\widetilde{\mathcal{L}}(g) := \alpha_1 \, \mathcal{L}_{\mathrm{outflow}}(g) + \alpha_2 \, \mathcal{L}_{\mathrm{branch}}(g), \quad \alpha_{k} > 0, \; k=1,2.
\end{equation}
Because $(x^{+})^2 = 0 \Leftrightarrow x^{+} = 0$ for all real $x$, the zero sets of $O_i$ and $\mathcal{L}_{\mathrm{branch}}$ are the same whether the positive parts are squared or not. Theorem~\ref{thm:main} therefore holds for this objective. The use of $(J^+)^2$ rather than $J^+$ makes the objective continuously differentiable at configurations where arc currents pass through zero, as the derivative of $(x^{+})^2$ with respect to $x$ is $2x^+$, which is continuous at $x=0$.

\section{Theoretical results}
\label{sec:main-result}

In what follows, the end of a proof is marked with the symbol $\square$ (notation due to P. Halmos). The penalties introduced in section~\ref{section:penalty-terms} are intended together to force the absolute current support~(\ref{equation:absolute-current-support}) to become a Hamiltonian path from $v^{IN}$ to $v^{OUT}$.

\begin{proposition}[Ideal zero-penalty structure]
\label{prop:zero-penalty-structure}
The absolute current support (\ref{equation:absolute-current-support}) corresponding to a conductance configuration $g$ satisfying
$$
\mathcal{L}_{\mathrm{outflow}}(g) = \mathcal{L}_{\mathrm{branch}}(g) = 0
$$
is a Hamiltonian path from $v^{IN}$ to $v^{OUT}$.
\end{proposition}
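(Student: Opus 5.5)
From the two zero-penalty conditions I would derive, node by node, that the set of arcs carrying positive current forms a single directed walk from $v^{IN}$ to $v^{OUT}$ through every node. Then I would use the fact that currents come from a potential to exclude cycles and repeated visits.

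First I would unpack the hypotheses. $\mathcal{L}_{\mathrm{outflow}}(g)=0$ gives $O_{v^{OUT}}(g)=0$ and $O_i(g)=1$ for every $i\neq v^{OUT}$. $\mathcal{L}_{\mathrm{branch}}(g)=0$ forces each $B_i(g)=0$, since every term is nonnegative. So at most one outgoing arc at each node carries positive current. Combining these, every $i\neq v^{OUT}$ has exactly one arc $(i,\sigma(i))$ with $J_{(i,\sigma(i))}(g)=1$, and all its other incident currents are $\le 0$. Meanwhile $v^{OUT}$ has no positive outgoing current. In particular some current flows, so $v^{IN}$ and $v^{OUT}$ lie in the same powered component $\mathcal N_P$. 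Every node has $O_i=1>0$ and hence lies in $\mathcal N_P$, so $\mathcal N_P=\mathcal N$.

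Next I would use the potential structure. Positive current on $(i,\sigma(i))$ means $\tilde g>0$ and $\phi_i>\phi_{\sigma(i)}$. Potentials therefore strictly decrease along the successor map $\sigma$, so iterating $\sigma$ from any node never revisits a node. Since $\sigma$ is defined at every node except $v^{OUT}$, every forward orbit terminates at $v^{OUT}$. Also, every arc in the absolute current support $\mathcal A[Y>0]$ is of the form $\{i,\sigma(i)\}$: a nonzero current on $\{i,j\}$ is positive in one direction, and by uniqueness that direction is the successor arc. Hence the support is exactly the arcs of the functional graph of $\sigma$, which is a tree directed into $v^{OUT}$ with $N-1$ arcs.

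It remains to show this tree is a path, and I expect this to be the main obstacle. I must rule out a node with two or more predecessors. I would use Kirchhoff's current law (\ref{equation:voltage-potential-equation}) at each interior node $i\notin\{v^{IN},v^{OUT}\}$: inflow equals outflow. Outflow is exactly $1$, while inflow is the sum of currents $1$ from each predecessor $k$ with $\sigma(k)=i$. Hence every interior node has exactly one predecessor.

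The endpoints need separate treatment. $v^{IN}$ has degree $1$ by (\ref{equation:vIN-and-vOUT-are-leaves}), and its single arc carries its outgoing unit current, so no predecessor can use it. $v^{OUT}$ also has degree $1$, so it has at most one predecessor. With each interior node having in-degree $1$, the source $v^{IN}$ having in-degree $0$, and every orbit reaching $v^{OUT}$, the orbit of $v^{IN}$ visits nodes each of whose unique predecessor is the previous node. This orbit already covers all nodes: any node off it would trace back through predecessors, which cannot cycle because potentials strictly increase backward. That backward chain must therefore end at a node with no predecessor, and the only such node is $v^{IN}$, a contradiction.

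Thus the support is the single directed path $v^{IN}\to\sigma(v^{IN})\to\cdots\to v^{OUT}$ through all $N$ nodes, which is a Hamiltonian path from $v^{IN}$ to $v^{OUT}$. $\square$
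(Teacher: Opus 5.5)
Your proof is correct and follows essentially the same route as the paper: potential monotonicity for acyclicity, a unique unit-current successor at every node other than $v^{OUT}$ so that every forward orbit ends at $v^{OUT}$, and Kirchhoff's current law at interior nodes to control in-degree. The only variation is in how in-degree exactly one is obtained. You get it directly from the fact that each positive current equals exactly $1$, so inflow counts predecessors. The paper instead gets in-degree at least one from Kirchhoff's law and upgrades it by counting the $N-1$ arcs of the spanning tree. You also make explicit that $\mathcal N_P=\mathcal N$, which the paper uses only implicitly.
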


\begin{proof}

The proof will proceed by first showing that the absolute current support (\ref{equation:absolute-current-support}) is acyclic, then that it is connected and that it is a spanning tree of the augmented graph $G$, and then that its only leaves are $v^{IN}$ and $v^{OUT}$ and that all the other nodes each have degree $2$. The desired result follows.

{\bf Proof of acyclicity.}
By Ohm's law (\ref{equation:single-directed-current}), $J_{(i,j)}~>~0$ implies
$\phi_i~>~\phi_j$. Hence a directed cycle of positive current would force
the potential to decrease around a closed loop, contradicting
single-valuedness of $\phi$. Thus the absolute current support is acyclic.

{\bf Proof of connectivity.} 
Since
$$
\mathcal L_{\mathrm{outflow}}(g)=0,
$$
condition (\ref{equation:unit-out-flow-requirement}) gives
$$
O_i(g)=1 \quad \text{for } i\neq v^{OUT},
\qquad
O_{v^{OUT}}(g)=0.
$$
These equalities, in turn, imply that every node except $v^{OUT}$ has a total outgoing current of $1$.  Condition $\mathcal L_{\mathrm{branch}}(g)=0$ implies that, for all nodes, current flows out of a node through at most one arc. Hence every node except $v^{OUT}$ has exactly one outgoing positive-current arc.

Starting at any node, construct a path from it recursively, node by node, as follows:  
\begin{enumerate}
\item If the current node has no outgoing positive current, stop: this is the last node of the path.
\label{step-1}
\item If the current node has an outgoing positive current, then this current is carried by a unique outgoing directed arc. Follow the arc to the next node, relabel the next node as the current node, and go to step \ref{step-1}.
\end{enumerate}
By acyclicity, this path has no repeated nodes and must terminate at a node with no outgoing positive current. The only such node is $v^{OUT}$. Therefore from every node distinct from $v^{OUT}$ there is a unique path to  $v^{OUT}$. This proves connectivity.

{\bf The tree is a spanning tree.} 
Since the absolute current support includes all $N$ nodes of the augmented graph, it is a spanning tree. It remains to examine the degrees of its nodes.

{\bf Nodes $v^{IN}$ and $v^{OUT}$ are the leaves, while all the other nodes each have degree $2$.}
By the construction in section~\ref{section:voltage-boundary-conditions}, $v^{IN}$ has no incoming positive current. At every node $i$ distinct from $v^{IN}$ and $v^{OUT}$, Kirchhoff's current law gives
$$
I_i(g)=O_i(g)=1.
$$
Thus every node except $v^{IN}$ and $v^{OUT}$ receives a total incoming current of $1$ and, therefore, has at least one incoming positive-current arc. Since the tree has $N-1$ arcs, every node except $v^{IN}$ has exactly one incoming positive-current arc.

It follows that $v^{IN}$ and $v^{OUT}$ are the only leaves of the tree and that all the other nodes each have degree $2$. Therefore the absolute current support is a path from $v^{IN}$ to $v^{OUT}$ that includes every node exactly once. Thus the absolute current support is a Hamiltonian path from $v^{IN}$ to $v^{OUT}$.
\end{proof}

\begin{lemma}
\label{lemma:existence-of-HP-implies-L-has-a-zero}
If the given graph $G_{0}$ (see (\ref{equation:original-graph})) has a Hamiltonian path from $h^{start}  \in  \mathcal{N}_{0}$ to $h^{end}  \in  \mathcal{N}_{0}$, then there exists a conductance configuration $g^{*}  \in  [0, 1]^{|\mathcal{A}_{0}|}$ at which the total objective (\ref{equation:total-objective}) vanishes:
\begin{equation}
\label{equation:total-objective=0-on-g*}
\widetilde{\mathcal{L}}(g^{*}) = 0.
\end{equation}
\end{lemma}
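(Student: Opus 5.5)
The plan is to construct $g^{*}$ explicitly and then compute every current in the network. Fix a Hamiltonian path (\ref{equation:HP-in-G_0}) from $h^{start}$ to $h^{end}$ in $G_0$. By (\ref{equation:HP-bijection}) it extends to a unique Hamiltonian path
\[
v^{IN}=u_1\to u_2\to\cdots\to u_N=v^{OUT}
\]
in the augmented graph $G$. Define $g^{*}_a=1$ if $a\in\mathcal A_0$ is an arc of this path and $g^{*}_a=0$ otherwise. Then $g^{*}\in\mathcal T_{HP}\subset[0,1]^{|\mathcal A_0|}$. The augmentation arcs, if present, carry $\tilde g=1$ and lie on the extended path. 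If $\{v^{IN},v^{OUT}\}\in\mathcal A_0$, its conductance is $0$, which is consistent with the convention of section~\ref{sec:HPP-as-linear-circuit-problem} because $N\ge 3$ and so this arc is not on the path.

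First we identify the conducting subgraph (\ref{equation:conducting-subgraph}). It consists of exactly the $N-1$ arcs $\{u_k,u_{k+1}\}$, so it is the Hamiltonian path itself. It is connected, so $\mathcal N_P=\mathcal N$. Next we solve the Dirichlet problem (\ref{equation:voltage-potential-equation}) with boundary values (\ref{equation:potential-boundary-values}). Set
\[
\phi_{u_k}=N-k,\qquad k=1,\dots,N.
\]
This satisfies the boundary values $\phi_{u_1}=N-1$ and $\phi_{u_N}=0$. At each interior node $u_k$ ($2\le k\le N-1$) only two arcs have nonzero conductance, and the Kirchhoff sum is
\[
1\cdot(\phi_{u_k}-\phi_{u_{k-1}})+1\cdot(\phi_{u_k}-\phi_{u_{k+1}})=-1+1=0 .
\]
Uniqueness of the solution holds because the conducting component is connected and has Dirichlet nodes, so the reduced Laplacian is nonsingular. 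Hence these are the potentials $\phi(g^{*})$.

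Ohm's law (\ref{equation:single-directed-current}) then gives $J_{(u_k,u_{k+1})}=1$ and $J_{(u_{k+1},u_k)}=-1$. Every other arc has $\tilde g=0$ and therefore zero current. We now evaluate the penalties.
\begin{itemize}
\item For $k\le N-1$, node $u_k$ has exactly one positive outgoing current, namely $J_{(u_k,u_{k+1})}=1$. Its other incident arcs carry either the negative current $J_{(u_k,u_{k-1})}=-1$ or zero current. So $O_{u_k}=1$.
\item For $v^{OUT}=u_N$, the only nonzero current on an incident arc is $J_{(u_N,u_{N-1})}=-1$, so $O_{v^{OUT}}=0$.
\end{itemize}
Thus $\mathcal L_{\mathrm{outflow}}(g^{*})=0$. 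Each node has at most one arc with positive outgoing current, so every product in (\ref{equation:local-branching-penalty}) vanishes and $\mathcal L_{\mathrm{branch}}(g^{*})=0$. Therefore $\widetilde{\mathcal L}(g^{*})=0$.

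The only delicate step is justifying that the convention and the Dirichlet problem of section~\ref{section:potential-vector} really produce these potentials. Two points need care. First, the zero-conductance arcs must be shown to contribute nothing to (\ref{equation:voltage-potential-equation}), which is immediate since their terms are multiplied by $\tilde g=0$. Second, the problem must have a unique solution on $\mathcal N_P$. I would verify this directly from the path structure: eliminate nodes one at a time along the path, so that each Kirchhoff equation forces $\phi_{u_{k+1}}-\phi_{u_k}=\phi_{u_k}-\phi_{u_{k-1}}$. This makes $\phi$ affine in $k$ and fixed by the two boundary values. No appeal to general Laplacian theory is then needed.
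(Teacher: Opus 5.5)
Your proof is correct and follows the paper's approach. Like the paper, you put conductance $1$ on the arcs of the extended Hamiltonian path and $0$ elsewhere, observe that unit current flows along the path and zero current on every other arc, and conclude that both penalties vanish; your explicit solution $\phi_{u_k}=N-k$ of the Dirichlet problem, with its uniqueness check, simply spells out the paper's series-resistance/Ohm's-law step.
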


\begin{proof}
Suppose $G_{0}$ has a Hamiltonian path (\ref{equation:HP-in-G_0}) and after the augmentation described in Table \ref{table:e.m.f.-terminal-design} this path extends (see (\ref{equation:HP-bijection})) to the Hamiltonian path
\begin{equation}
\label{equation:given-HP}
v^{IN} = v_{1} \to v_{2} \to \ldots \to v_{N_0-1} \to v_{N} = v^{OUT}.
\end{equation}
Construct conductance configuration $g^{*}$ as follows. Assign conductance $1$ on each of the arcs $\{v_{k},v_{k+1}\}$ in (\ref{equation:given-HP}) and conductance $0$ on the rest of the arcs.

The conducting subgraph (\ref{equation:conducting-subgraph}) with $g = g^{*}$ is path (\ref{equation:given-HP}). It has a total of $N-1$ arcs, each of conductance $1$. The total resistance between $v^{IN}$ and $v^{OUT}$ is therefore $N-1$. Since the applied potential drop is also $N-1$, Ohm's law gives current $1$ through each arc of path (\ref{equation:given-HP}), directed from $v^{IN}$ to $v^{OUT}$, and current $0$ on every arc assigned conductance $0$. The absolute current support at $g^*$ is therefore (\ref{equation:given-HP}).

Direct computation shows that $\mathcal{L}_{\mathrm{outflow}}$ and $\mathcal{L}_{\mathrm{branch}}$ both vanish at $g^*$. Equality (\ref{equation:total-objective=0-on-g*}) follows.
\end{proof}

\begin{theorem}
\label{thm:main}
If the given graph $G_{0}$ has a Hamiltonian path from node $h^{start}$ to node $h^{end}$, then every global minimizer of the total objective (\ref{equation:total-objective}) over $\mathcal G_{0} = [0,1]^{|\mathcal A_0|}$ belongs to $\mathcal T_{HP}$.
\end{theorem}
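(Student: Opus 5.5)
The plan is to combine Lemma~\ref{lemma:existence-of-HP-implies-L-has-a-zero} with Proposition~\ref{prop:zero-penalty-structure}, and then pass from ``the current support is a Hamiltonian path'' to ``the conductance configuration itself is the binary indicator of that path.'' Since both penalty terms are nonnegative and $\alpha_1,\alpha_2>0$, we have $\widetilde{\mathcal L}\ge 0$ on $[0,1]^{|\mathcal A_0|}$. By Lemma~\ref{lemma:existence-of-HP-implies-L-has-a-zero}, the existence of a Hamiltonian path from $h^{start}$ to $h^{end}$ gives a configuration $g^*$ with $\widetilde{\mathcal L}(g^*)=0$. Hence every global minimizer $g_*$ satisfies $\widetilde{\mathcal L}(g_*)=0$, and therefore $\mathcal L_{\mathrm{outflow}}(g_*)=\mathcal L_{\mathrm{branch}}(g_*)=0$. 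Proposition~\ref{prop:zero-penalty-structure} then shows that the absolute current support $\mathcal A[Y>0]$ at $g_*$ is a Hamiltonian path
$$
v^{IN}=w_1\to w_2\to\cdots\to w_N=v^{OUT}
$$
in the augmented graph $G$, with current flowing in the direction of the path.

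Two facts about $g_*$ remain to be shown: (i) $g_*$ vanishes on every arc of $\mathcal A_0$ off this path, and (ii) $g_*$ equals $1$ on every path arc.

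For (i), I use the potentials. Each path arc carries positive current $w_k\to w_{k+1}$, so Ohm's law (\ref{equation:single-directed-current}) gives $\phi_{w_k}>\phi_{w_{k+1}}$. The potentials along the path are therefore strictly decreasing, so all $N$ nodes have pairwise distinct potentials. All nodes lie in the powered component $\mathcal N_P$, because the path connects $v^{IN}$ and $v^{OUT}$ through positive-conductance arcs. Now take any arc $\{w_k,w_\ell\}\in\mathcal A$ not on the path with $\tilde g_{\{w_k,w_\ell\}}>0$. It would carry current $\tilde g\,(\phi_{w_k}-\phi_{w_\ell})\neq 0$, and so would belong to the current support, contradicting the fact that the support is exactly the path. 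Hence every off-path arc of $\mathcal A_0$ has $g_*=0$; this includes the arc $\{v^{IN},v^{OUT}\}$ if it is present, although that conductance is fixed at $0$ anyway. Consequently the conducting subgraph equals the path. In particular, the path's arcs lying in $\mathcal A$ but not in $\mathcal A_0$ (the augmentation arcs) are exactly the ones forced into it, so $\mathcal A_0[g_*>0]$ is the $\mathcal A_0$-part of a Hamiltonian path from $v^{IN}$ to $v^{OUT}$, as required by (\ref{equation:target-set}).

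For (ii), I use the unit out-flow condition. Since $\mathcal L_{\mathrm{outflow}}(g_*)=0$ and each node $w_k$ with $k<N$ has a unique outgoing positive-current arc, $O_{w_k}=1$ forces $J_{(w_k,w_{k+1})}=1$ for every $k$. For a series circuit of $N-1$ conductances $\tilde g_1,\dots,\tilde g_{N-1}\in(0,1]$ under the potential drop $N-1$ from (\ref{equation:potential-boundary-values}), the common current is
$$
\frac{N-1}{\sum_k 1/\tilde g_k}.
$$
Since each $1/\tilde g_k\ge 1$, the denominator is at least $N-1$, and the current equals $1$ if and only if every $\tilde g_k=1$. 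Alternatively, summing the drops $1/\tilde g_k\ge 1$ over the $N-1$ arcs must give exactly $N-1$. Hence $g_*\in\{0,1\}^{|\mathcal A_0|}$ with $\mathcal A_0[g_*=1]$ the arc set of a Hamiltonian path, i.e.\ $g_*\in\mathcal T_{HP}$.

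I expect the main obstacle to be step (i). Proposition~\ref{prop:zero-penalty-structure} only controls the current support, whereas $\mathcal T_{HP}$ is defined through the conductances, so one must rule out ``silent'' arcs that have positive conductance but zero current. The strict monotonicity of the potentials along a spanning path is what closes this gap. I would check carefully that this argument uses only that every node lies in $\mathcal N_P$, so that the convention setting potentials to $0$ on unpowered components never applies.
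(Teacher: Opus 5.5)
Your proof is correct and follows the paper's route: Lemma~\ref{lemma:existence-of-HP-implies-L-has-a-zero} and Proposition~\ref{prop:zero-penalty-structure} give zero penalties and a Hamiltonian current support, then Ohm's law with total drop $N-1$ pins every path conductance at $1$, and pairwise distinct potentials force every off-path conductance to $0$. The differences are cosmetic: you handle the off-path arcs first, get distinct potentials from strict decrease along the path rather than from unit drops, read off unit current on each arc directly from $O_{w_k}=1$, and explicitly check that all nodes lie in $\mathcal N_P$, which the paper leaves implicit.
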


\begin{proof}
Each summand in (\ref{equation:total-objective}) is nonnegative by construction, so the total penalty $\widetilde{\mathcal L}$ is a nonnegative function.

Suppose the augmentation $G$ of $G_{0}$ (see section \ref{section:terminal-node-construction}) has a Hamiltonian path (\ref{equation:given-HP}). By Lemma~\ref{lemma:existence-of-HP-implies-L-has-a-zero}, there exists a conductance configuration $g^* \in \mathcal T_{HP}$ such that $\widetilde{\mathcal L}(g^*)=0$. It follows that $0$ is not only a lower bound of $\widetilde{\mathcal L}$ but is actually attained, hence is the global minimum:
$$
\min_{g \in \mathcal G_0} \widetilde{\mathcal L} = 0.
$$
Now let $\hat g$ be any global minimizer of $\widetilde{\mathcal L}$:
\begin{equation}
\label{equation:g-hat-global-minimizer}
\widetilde{\mathcal L}(\hat g)=0.
\end{equation}
Since every coefficient $\alpha_k$ in (\ref{equation:total-objective}) is strictly positive and every penalty term is nonnegative, condition (\ref{equation:g-hat-global-minimizer}) implies
$$
\mathcal L_{\mathrm{outflow}}(\hat g) = \mathcal L_{\mathrm{branch}}(\hat g) = 0.
$$
By Proposition~\ref{prop:zero-penalty-structure}, the absolute current support (\ref{equation:absolute-current-support}) corresponding to $g = \hat g$ is a Hamiltonian path from $v^{IN}$ to $v^{OUT}$. We now determine the conductance values assigned by $\hat{g}$ to the arcs of this path.

Since $\mathcal{L}_{\mathrm{branch}}(\hat{g})=0$, outgoing current does not branch out at any node, so the same unit amount of outgoing current from $v^{IN}$ is carried by every arc of the Hamiltonian path in sequence. Each such arc $\{i, j\}$ therefore carries current $1$, and Ohm's law gives
$$
1 = \hat g_{\{i,j\}} \,|\phi_i-\phi_j|,
$$
which implies
\begin{equation}
\label{equation:voltage-drop-on-HP-arc}
|\phi_i-\phi_j| = \frac{1}{\hat g_{\{i,j\}}} \geq 1,
\end{equation}
since $0<\hat g_{\{i,j\}}\leq1$. Hamiltonian path (\ref{equation:given-HP}) has a total of $N-1$ arcs, while the total potential drop between $v^{IN}$ and $v^{OUT}$ equals $N-1$. Since each arc contributes a potential drop of at least $1$ by (\ref{equation:voltage-drop-on-HP-arc}), that contribution must be exactly $1$. Thus every arc of Hamiltonian path (\ref{equation:given-HP}) has conductance $1$.

It follows that all nodes of the path have distinct potentials. Now let $a=\{i,j\} \in \mathcal A_0$ be an arc not on the Hamiltonian path. If it was the case that $\hat g_a>0$, then Ohm's law would produce nonzero current on $a$, since the endpoints of $a$ have distinct potentials. This contradicts the fact that the Hamiltonian path is the absolute current support. Therefore every arc not on the Hamiltonian path has conductance $0$, and $\hat g \in \mathcal T_{HP}$.
\end{proof}

\section{Discussion}
\label{sec:discussion}

Theorem~\ref{thm:main} shows that, if the given graph contains a Hamiltonian path from
$
h^{start}
$
to
$
h^{end},
$
then the global minimizers of the objective
$
\widetilde{\mathcal L}
$
are precisely the conductance configurations corresponding to Hamiltonian paths. The proof uses only elementary properties of electric networks together with the structure imposed by the penalty terms.

The result should not be interpreted as a polynomial-time algorithm for the Hamiltonian path problem. The objective
$
\widetilde{\mathcal L}
$
is generally nonconvex and may possess many local minima. The numerical experiments show that straightforward projected gradient descent often fails to locate the global minimum even for graphs of moderate size. Thus the present work addresses the question of {\em exact characterization} rather than that of {\em efficient computation}. The formulation presented here, as a minimization of an objective on the unit cube in a Euclidean space, embeds the originally discrete HPP in a framework of analysis and geometry of continuous and functions with certain smoothness properties in Euclidean vector spaces. This allows, in particular, for a study of the gradient and Hessian of the objective function or of its modifications and for a geometric interpretation of some combinatorial properties of a Hamiltonian path, such as the statement that the target set (\ref{equation:target-set}) is contained in the intersection of cube $[0, 1]^{|\mathcal{A}_{0}|}$ and hyperplane
$$
\sum_{a \in \mathcal{A}} g_{a} = N-1.
$$

\bibliographystyle{plain}
\bibliography{bibliography-for-switched-systems-HPP}

\end{document}